\documentclass[12pt,reqno]{article}
\usepackage{amssymb}
\usepackage{amsmath}
\usepackage{amsthm}
\usepackage{amsfonts}
\usepackage{url}
\usepackage{hyperref}

\newcommand{\N}{{\mathbb N}}

\newcommand{\Z}{{\mathbb Z}}

\newcommand{\raisecomma}{\raisebox{2pt}{$,$}}

\newtheorem{theorem}{Theorem}
\newtheorem{corollary}{Corollary}
\newtheorem{lemma}{Lemma}

\newtheorem{remark}{Remark}

\newcommand{\seqnum}[1]{\href{http://oeis.org/#1}{\underline{#1}}}

\begin{document}
\bibliographystyle{plain}
\title{Some binomial sums involving absolute values}
\author{Richard P. Brent\\
Australian National University\\
Canberra, ACT 2600,
Australia\\
\href{mailto:binomial@rpbrent.com}{\tt binomial@rpbrent.com}
\and
Hideyuki Ohtsuka\\
Bunkyo University High School\\
1191-7, Kami, Ageo-City\\
Saitama Pref., 362-0001, Japan
\and
Judy-anne H. Osborn\\
The University of Newcastle\\
Callaghan, NSW 2308,
Australia
\and
Helmut Prodinger\\
Stellenbosch University\\
7602 Stellenbosch, South Africa
}

\date{}	% No date

\maketitle
\thispagestyle{empty}                   % To avoid page number

\begin{abstract}
We consider several families of binomial sum identities whose definition
involves the absolute value function. 
In particular, we consider centered double sums of the form
\[S_{\alpha,\beta}(n) := \sum_{k,\;\ell}\binom{2n}{n+k}\binom{2n}{n+\ell}
  |k^\alpha-\ell^\alpha|^\beta,\]
obtaining new results in the cases $\alpha = 1, 2$.
We show that there is a close connection between
these double sums in the case $\alpha=1$ and the single centered binomial
sums considered by Tuenter.
\end{abstract}

\pagebreak[3]
\section{Introduction}				\label{sec:intro}

The problem of finding a closed form
for the binomial sum
\begin{equation}				\label{eq:motivation}
\sum_{k,\;\ell}\binom{2n}{n+k}\binom{2n}{n+\ell}|k^2-\ell^2|
\end{equation}
arises in an application of the probabilistic
method to the Hadamard maximal determinant problem~\cite{rpb257}.
Because of the double-summation and the 
absolute value occurring in~\eqref{eq:motivation},
it is not obvious how to apply standard techniques~\cite{Gosper,AeqB,Wilf}.
A closed-form solution
\begin{equation}
2n^2\binom{2n}{n}^2		\label{eq:BO_rpb255}
\end{equation}
was proved by Brent and Osborn in~\cite{rpb255},
and simpler proofs were subsequently found~\cite{rpb260,Callan,Prodinger1}.
In this paper we consider
a wider class of binomial sums with the distinguishing feature
that an absolute value occurs in the summand.

Specifically, we consider certain $d$-fold binomial sums of the form
\begin{equation}	\label{eq:general_form}
S(n) := \sum_{k_1,\ldots,k_d}
	\prod_{i=1}^d \binom{2n}{n+k_i}\; |f(k_1,\ldots,k_d)|,
\end{equation}
where 
$f:\Z^d \rightarrow \Z$ is a homogeneous polynomial
and $|f|$ will be called the \emph{weight function}.
For example, a simple case
is $d=1$, $f(k) = k$. This case was considered by
Best~\cite{Best} in an application to Hadamard matrices.
The closed-form solution is
\[
\sum_k \binom{2n}{n+k}|k| = n\binom{2n}{n}.\]
A generalization
$f(k) = k^r$ (for a fixed $r \in \N$)
was considered by Tuenter~\cite{Tuenter1}, and shown to be expressible
using Dumont-Foata polynomials~\cite{DF}.
Tuen\-ter gave an interpretation in terms of the moments of the distance
to the origin in a symmetric Bernoulli random walk.  It is easy to see
that this interpretation generalizes:
$4^{-nd}S(n)$ is the expectation of $|f(k_1,\ldots,k_d)|$
if we start at the origin and take $2n$ random steps $\pm\frac{1}{2}$
in each of
$d$ dimensions, thus arriving at the point $(k_1,\ldots,k_d)\in\Z^d$
with probability 
\[
4^{-nd}\prod_{i=1}^d \binom{2n}{n+k_i}.
\]
A further generalization replaces 
$\binom{2n}{n+k_i}$ by $\binom{2n_i}{n_i+k_i}$, allowing the number of
random steps $(2n_i)$ in dimension~$i$ to depend on~$i$. With a suitable
modification to the definition of $S$, we could also drop the restriction to
an even number of steps in each dimension.\footnote{For
example, in the case $d=1$ we could consider $\sum_k \binom{n}{k}|f(n-2k)|$.}
We briefly 
consider such a generalization in \S\ref{sec:reducible}.

Tuenter's results for the case $d=1$ were generalized
by the first author~\cite{rpb259}.
In this paper we concentrate on the case $d=2$.  
Generalizations of some of the results
to arbitrary $d$ are known.
More specifically, the paper~\cite{rpb263} gives closed-form solutions for the
$d$-dimensional generalization of the sum~\eqref{eq:Srmndef} below
in the cases $\alpha,\beta\in\{1,2\}$.

There are many binomial coefficient identities in the literature, e.g.\
$500$ are given by Gould~\cite{Gould}.  Many such identities can be
proved via generating functions~\cite{GKP,Wilf} or
the Wilf-Zeilberger algorithm~\cite{AeqB}.
Nevertheless, we hope that the reader will find our results interesting, in
part because of the applications mentioned above, 
and also because it is a challenge
to generalize the results to higher values of~$d$.

A preliminary version of this paper, with some of the results
conjectural, was made available on arXiv~\cite{rpb260}. All the
conjectures have since been proved by 
Bostan, Lairez and Salvy~\cite{Bostan15},
Krattenthaler and Schneider~\cite{KS},
Brent, Krattenthaler and Warnaar \cite{rpb263},
and the present authors.

An outline of the paper follows.

In \S\ref{sec:reducible} we consider a special
class of double sums that can be
reduced to the single sums of~\cite{rpb259,Tuenter1}.

In \S\ref{sec:2fold} we consider a generalization of the
motivating case~\eqref{eq:motivation} described above:
$f(k,\ell) = (k^\alpha-\ell^\alpha)^\beta$.
In the case $\alpha=2$
we give recurrence relations that allow such sums to be evaluated
in closed form for any given positive integer~$\beta$.
The recurrence relations naturally split
into the cases where $\beta$ is even (easy)
and odd (more difficult).

Theorem~\ref{thm:3-fold} in
\S\ref{sec:triple} gives a closed form for an analogous triple sum.
In~\cite[Conjecture 2]{rpb260} 
a closed form for the analogous
quadruple sum was conjectured. This conjecture has now been
proved by Brent, Krattenthaler and Warnaar~\cite{rpb263}; in fact they
give a generalization to arbitrary positive integer~$d$.

In \S\ref{sec:further} we state several double sum
identities that were proved or conjectured by us~\cite{rpb260}.
The missing proofs have now been provided by 
Bostan, Lairez and Salvy~\cite{Bostan15}
and by Krattenthaler and Schneider~\cite{KS}.

\subsection*{Notation}

The set of all integers is $\Z$, and the set of non-negative integers
is $\N$,  % $\N$ includes zero

The binomial coefficient $\binom{n}{k}$ is defined to be zero if $k<0$ or
$k>n$ (and hence always if $n < 0$). 
Using this convention, we often avoid explicitly specifying upper and
lower limits on $k$ or excluding cases where $n < 0$.

In the definition of the weight function $|f|$, we always interpret
$0^0$ as $1$.

\section{Some double sums reducible to single sums}	\label{sec:reducible}

Tuenter~\cite{Tuenter1} considered the binomial sum
\begin{equation}	\label{eq:Sr_def}
S_\beta(n) := \sum_k \binom{2n}{n+k} |k|^\beta,
\end{equation}
and a generalization%
\footnote{It is a generalization because $S_\beta(n) = U_\beta(2n)$,
but $U_\beta(n)$ is well-defined for all $n\in\N$.
}
to
\begin{equation}		\label{eq:Ur_def}
U_\beta(n) := \sum_k \binom{n}{k}\left|\frac{n}{2}-k\right|^\beta
\end{equation}
was given by the first author~\cite{rpb259}.

Tuenter showed that
\begin{equation}		\label{eq:PQ_def}
S_{2\beta}(n) = Q_\beta(n)2^{2n-\beta}, \;\; 
  S_{2\beta+1}(n) = P_\beta(n)n\binom{2n}{n},
\end{equation}
where $P_\beta(n)$ and $Q_\beta(n)$ are polynomials of degree~$\beta$
with integer
coefficients, satisfying certain three-term recurrence relations, and
expressible in terms of Dumont-Foata polynomials~\cite{DF}.
Closed-form expressions for $S_\beta(n)$, $P_\beta(n)$, $Q_\beta(n)$
are known~\cite{rpb259}.

In this section we consider the double sum
\begin{equation}	\label{eq:Tr_def}
T_\beta(m,n) := \sum_{k,\;\ell}
 \binom{2m}{m+k}\binom{2n}{n+\ell}|k-\ell|^\beta
\end{equation}
and show that it can be expressed as a single sum of the
form~\eqref{eq:Sr_def}.

\begin{theorem}		\label{thm:reduce2to1}
For all $\beta, m, n \in \N$, we have
\[T_\beta(m,n) = S_\beta(m+n),\]
where $T_\beta$ is defined by~$\eqref{eq:Tr_def}$ and
$S_\beta$ is defined by~$\eqref{eq:Sr_def}$.
\end{theorem}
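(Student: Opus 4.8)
The plan is to reduce the double sum in~\eqref{eq:Tr_def} to the single sum~\eqref{eq:Sr_def} by a Vandermonde-convolution argument, after first exploiting the symmetry of the central binomial coefficients to convert the weight $|k-\ell|^\beta$ into $|k+\ell|^\beta$. Since $\binom{2n}{n-\ell}=\binom{2n}{n+\ell}$ for every $\ell\in\Z$, reindexing the inner sum by $\ell\mapsto-\ell$ leaves both binomial factors unchanged while replacing $|k-\ell|^\beta$ by $|k+\ell|^\beta$, so that
\[T_\beta(m,n)=\sum_{k,\;\ell}\binom{2m}{m+k}\binom{2n}{n+\ell}\,|k+\ell|^\beta.\]
All of these sums are finite, because $\binom{2m}{m+k}=0$ unless $|k|\le m$, so there are no convergence or rearrangement issues to worry about.

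Next I would collect terms according to the value of $j:=k+\ell$, which gives
\[T_\beta(m,n)=\sum_{j}|j|^\beta\sum_{k}\binom{2m}{m+k}\binom{2n}{n+j-k}.\]
The inner sum is a Vandermonde convolution: substituting $i=m+k$ turns it into $\sum_i\binom{2m}{i}\binom{2n}{(m+n+j)-i}=\binom{2m+2n}{m+n+j}$. Feeding this back in yields
\[T_\beta(m,n)=\sum_{j}\binom{2(m+n)}{(m+n)+j}\,|j|^\beta=S_\beta(m+n),\]
which is precisely~\eqref{eq:Sr_def} with $n$ replaced by $m+n$, completing the proof.

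There is no genuinely hard step here; the only points needing a little care are (i) that it is the substitution $\ell\mapsto-\ell$, rather than $k\mapsto-k$ or the symmetry applied to the first factor, that produces the sum $|k+\ell|$ to which Vandermonde applies, and (ii) keeping track of the index shift $i=m+k$ so that the textbook form of Vandermonde's identity can be quoted directly. I would present this computation as the proof proper.

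As an alternative, one can argue probabilistically, in the spirit of the random-walk interpretation described in the introduction. Here $4^{-m-n}T_\beta(m,n)$ is the expectation of $|X-Y|^\beta$, where $X$ is the endpoint of a symmetric $\pm\frac12$ walk of $2m$ steps and $Y$ the endpoint of an independent such walk of $2n$ steps. Because the step distribution is symmetric, $-Y$ has the same law as $Y$, so $X-Y$ is distributed as the endpoint of a single symmetric walk of $2(m+n)$ steps; hence $4^{-m-n}T_\beta(m,n)=E\,|X-Y|^\beta=4^{-m-n}S_\beta(m+n)$, which is equivalent to the claimed identity. I would include this as a remark, since it explains conceptually why the identity holds.
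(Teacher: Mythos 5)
Your proof is correct, and while it rests on the same key tool as the paper's proof --- Vandermonde's convolution --- it handles the absolute value differently, and more cleanly. The paper splits the sum into the three cases $k>\ell$, $k<\ell$, $k=\ell$, notes that the diagonal vanishes when $\beta>0$ (which forces a separate check for $\beta=0$), and then applies Vandermonde twice, once to each off-diagonal piece. Your reflection $\ell\mapsto-\ell$, justified by $\binom{2n}{n-\ell}=\binom{2n}{n+\ell}$, turns the weight $|k-\ell|^\beta$ into $|k+\ell|^\beta$, after which $j=k+\ell$ is exactly the convolution variable; a single application of Vandermonde finishes the job, with no case analysis and with $\beta=0$ handled uniformly (the convention $0^0=1$ is preserved throughout, since you only ever substitute in the weight). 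What the paper's version buys is that the same sign-splitting template recurs elsewhere in the paper (e.g., in the derivation of the recurrences in \S\ref{sec:Grec}), so it is the more reusable pattern there; what yours buys is brevity and the elimination of edge cases. Your probabilistic remark is also correct --- it is the reflection argument in disguise, since $-Y$ having the same law as $Y$ is exactly the symmetry $\binom{2n}{n-\ell}=\binom{2n}{n+\ell}$ --- and it matches the random-walk interpretation the paper itself offers in the introduction, so it is a natural aside to include.
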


\begin{proof}
If $\beta=0$ then $T_0(m,n) = 2^{2(m+n)} = S_0(m+n)$.
Hence, we may assume that
$\beta>0$ (so $0^\beta = 0$).  Let $d = |k-\ell|$.
We split the sum~\eqref{eq:Tr_def} defining
$T_\beta(m,n)$ into three parts, corresponding to $k > \ell$, $k < \ell$, and
$k=\ell$.  The third part vanishes.
If $k>\ell$ then $d=k-\ell$ and $k=d+\ell$;
if $k<\ell$ then $d=\ell-k$ and $\ell=d+k$.
Thus, we get
\begin{align*}
T_\beta(m,n) &=
\sum_{d>0} \sum_\ell \binom{2m}{m\!+\!d\!+\!\ell}\binom{2n}{n\!+\!\ell}d^\beta
 +\sum_{d>0} \sum_k \binom{2m}{m\!+\!k}\binom{2n}{n\!+\!k\!+\!d}d^\beta\\
&= \sum_{d>0} d^\beta \sum_\ell \!\binom{2m}{m\!+\!d\!+\!\ell}
 \!\binom{2n}{n-\ell}
 +\sum_{d>0} d^\beta \sum_k \!\binom{2n}{n\!+\!k\!+\!d}\!\binom{2m}{m-k}.
\end{align*}
By Vandermonde's identity, the inner sums over $k$ and $\ell$ are both equal to
$\binom{2m+2n}{m+n+d}.$
Thus, 
\[
T_\beta(m,n) =
2\sum_{d>0} \binom{2m+2n}{m+n+d}d^\beta 
	= \sum_d \binom{2m+2n}{m+n+d} |d|^\beta 
	= S_\beta(m+n).
\]
\end{proof}
\begin{remark}
{\rm
If $m=n$ then, by the shift-invariance of the weight $|k-\ell|^\beta$, we have
\begin{equation}				\label{eq:T_beta_nn}
T_\beta(n,n) = \sum_{k,\;\ell} \binom{2n}{k}\binom{2n}{\ell}|k-\ell|^\beta
	= S_\beta(2n).
\end{equation}
There is no need for the upper argument of the binomial
coefficients to be even in~\eqref{eq:T_beta_nn}. We can adapt the proof
of Theorem~\ref{thm:reduce2to1} to show that, for all $n\in\N$,
\[
\sum_{k,\;\ell} \binom{n}{k}\binom{n}{\ell}|k-\ell|^\beta
        = S_\beta(n).
\]
}
\end{remark}

\section{Centered double sums}	\label{sec:2fold}

In this section we consider the centered double binomial sums defined by%
\footnote{The double sum $S_{\alpha,\beta}(n)$ should not be confused
with the single sum $S_\alpha(n)$ of~\S\ref{sec:reducible}.}

\begin{equation}	\label{eq:Srmndef}
S_{\alpha,\beta}(n) := \sum_{k,\;\ell}\binom{2n}{n+k}\binom{2n}{n+\ell}
  |k^\alpha-\ell^\alpha|^\beta.
\end{equation}
Note that $S_{1,\beta}(n) = T_\beta(n,n)$, so the case $\alpha=1$ is
covered by Theorem~\ref{thm:reduce2to1}. Thus, in the following we
can assume that $\alpha \ge 2$.
Since we mainly consider the case $\alpha=2$, 
it is convenient to define
\begin{equation}			\label{eq:Wrdef}
W_\beta(n) := S_{2,\beta}(n)
 = \sum_{k,\;\ell}\binom{2n}{n+k}\binom{2n}{n+\ell}|k^2-\ell^2|^\beta.
\end{equation}

\begin{remark}
{\rm
The sequences $(S_{\alpha,\beta}(n))_{n\ge 1}$ for $\alpha\in\{1,2\}$ and
$1\le \beta\le 4$ are in the OEIS~\cite{OEIS}.
Specifically, $(S_{1,1}(n))_{n\ge 1}$ is a subsequence of A166337
(the entry corresponding to $n=0$ must be discarded).
$(S_{2,1}(n))_{n\ge 0}$ is A254408,
and $(S_{\alpha,\beta}(n))_{n\ge 0}$ for 
$(\alpha,\beta) = (1,2), (2,2), (1,3), (2,3), (1,4), (2,4)$
are A268147, A268148, \ldots, A268152 respectively.
}
\end{remark}

\subsection{\texorpdfstring{$W_\beta$ for odd $\beta$}{The odd case}}
 \label{subsec:odd}

The analysis of $W_\beta(n)$ 
naturally splits into two cases, depending on the parity of
$\beta$. We first consider the case that $\beta$ is odd.
A simpler approach is possible when $\beta$ is even, as we show
in~\S\ref{subsec:even}.

As mentioned in \S\ref{sec:intro}, the evaluation of $W_1(n)$
was the motivation for this paper, and is given
in the following theorem.

\begin{theorem}[Brent and Osborn]			\label{thm:S12}
\begin{equation*}
W_{1}(n) =\sum_{k,\;\ell}\binom{2n}{n+k}\binom{2n}{n+\ell}|k^2-\ell^2|
 = 2n^2\binom{2n}{n}^2.
\end{equation*}
\end{theorem}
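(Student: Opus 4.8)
The plan is to strip the absolute value by using $|k^2-\ell^2| = (k^2+\ell^2) - 2\min(k^2,\ell^2)$ and reducing $W_1(n)$ to single-variable binomial sums. I would first record the symmetries of the summand $\binom{2n}{n+k}\binom{2n}{n+\ell}|k^2-\ell^2|$: it is invariant under $k\leftrightarrow\ell$ and, separately, under $k\mapsto -k$ and under $\ell\mapsto -\ell$ (since $\binom{2n}{n+k}=\binom{2n}{n-k}$ and the weight depends only on $k^2$ and $\ell^2$), and its diagonal part $k^2=\ell^2$ vanishes. The $(k^2+\ell^2)$ part of $W_1(n)$ then factors as $2\big(\sum_k k^2\binom{2n}{n+k}\big)\big(\sum_\ell\binom{2n}{n+\ell}\big) = 2\cdot n2^{2n-1}\cdot 2^{2n}=n\,2^{4n}$, where the central-binomial second moment $\sum_k k^2\binom{2n}{n+k}=n2^{2n-1}$ follows from the absorption identity $(n-k)(n+k)\binom{2n}{n+k}=2n(2n-1)\binom{2n-2}{n-1+k}$ and summing. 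Thus it suffices to prove that $M(n):=\sum_{k,\ell}\binom{2n}{n+k}\binom{2n}{n+\ell}\min(k^2,\ell^2)$ equals $n\,2^{4n-1}-n^2\binom{2n}{n}^2$, since then $W_1(n)=n2^{4n}-2M(n)=2n^2\binom{2n}{n}^2$.

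For $M(n)$ I would apply the layer-cake identity $\min(a,b)^2=\sum_{j\ge 1}(2j-1)\,\mathbf{1}[j\le a]\,\mathbf{1}[j\le b]$, valid for integers $a,b\ge 0$ because $\sum_{j=1}^{t}(2j-1)=t^2$. Swapping the order of summation gives $M(n)=\sum_{j\ge 1}(2j-1)A_j^2$ with $A_j:=\sum_{|k|\ge j}\binom{2n}{n+k}=2\sum_{i\ge n+j}\binom{2n}{i}$, a tail of the $(2n)$-th binomial row. Since $A_j$ vanishes for $j>n$, summation by parts with $2j-1=j^2-(j-1)^2$ gives $M(n)=\sum_{j\ge 1}j^2\big(A_j^2-A_{j+1}^2\big)=\sum_{j\ge 1}j^2(A_j-A_{j+1})(A_j+A_{j+1})$, and $A_j-A_{j+1}=2\binom{2n}{n+j}$ for $j\ge 1$. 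Expanding $A_j+A_{j+1}$ back into a sum of binomial coefficients turns this into $M(n)=4\sum_{1\le j\le s}j^2\binom{2n}{n+j}\binom{2n}{n+s}+4\sum_{1\le j<s}j^2\binom{2n}{n+j}\binom{2n}{n+s}$.

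To finish, I would evaluate these one-sided convolutions. Separating the diagonal $j=s$ produces the Vandermonde-type sum $\sum_{j\ge 1}j^2\binom{2n}{n+j}^2=\tfrac12 n\binom{4n-2}{2n-1}$ (obtained, e.g., by applying $t\,\frac{d}{dt}$ twice to $\sum_j\binom{2n}{n+j}^2 t^j=t^{-n}[y^{2n}](1+y)^{2n}(1+ty)^{2n}$ and setting $t=1$), together with a genuinely asymmetric piece $\sum_{1\le j<s}j^2\binom{2n}{n+j}\binom{2n}{n+s}$. For the latter I would rewrite $j^2\binom{2n}{n+j}$ via the absorption identities $(n+j)\binom{2n}{n+j}=2n\binom{2n-1}{n-1+j}$ and $(n-1+j)\binom{2n-1}{n-1+j}=(2n-1)\binom{2n-2}{n-2+j}$, so that the partial sums over $j$ telescope — the term with odd top $2n-1$ is the convenient one, since $\sum_{i\ge n}\binom{2n-1}{i}=2^{2n-2}$ needs no central-term correction — and then collapse the remaining double sum with Vandermonde. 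Collecting everything should yield $M(n)=n\,2^{4n-1}-n^2\binom{2n}{n}^2$, hence the theorem.

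The main obstacle is this last step. The summation by parts leaves a truly one-sided sum $\sum_{j<s}$ whose inner sum over $s$ is a partial sum of binomial coefficients, which has no elementary closed form; forcing the term $\binom{2n}{n}^2$ on the right-hand side to emerge requires re-assembling the pieces into full Vandermonde-type convolutions while tracking the central-term corrections carefully, and it is easy to go in circles if one only shuffles the symmetric pieces. If that becomes unwieldy, I would instead extract a linear recurrence in $n$ for $M(n)$ (or directly for $W_1(n)$) by creative telescoping on the summand and check it against the proposed closed form together with one initial value; the probabilistic reading — $4^{-2n}W_1(n)$ is the expectation of $|X^2-Y^2|$ for independent centred $2n$-step lattice walks — also suggests that a reflection or symmetry argument could bypass most of the computation.
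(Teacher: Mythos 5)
Your reductions are correct as far as they go: the identity $|k^2-\ell^2|=(k^2+\ell^2)-2\min(k^2,\ell^2)$, the evaluation $\sum_k k^2\binom{2n}{n+k}=n2^{2n-1}$, the layer-cake/Abel step giving
$M(n)=4\sum_{j\ge1}j^2\binom{2n}{n+j}^2+8\Sigma$ with $\Sigma=\sum_{1\le j<s}j^2\binom{2n}{n+j}\binom{2n}{n+s}$, and the diagonal value $\sum_{j\ge1}j^2\binom{2n}{n+j}^2=\tfrac{n}{2}\binom{4n-2}{2n-1}$ all check out. But the proof is not complete, and you say so yourself: everything now hinges on showing
$\Sigma=\tfrac18\bigl(n2^{4n-1}-n^2\binom{2n}{n}^2-2n\binom{4n-2}{2n-1}\bigr)$,
and this is exactly where the original difficulty lives. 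The one-sided constraint $j<s$ is the absolute value in disguise, so you have transformed the problem into another double sum of the same hard type rather than solved it. The sketched plan (absorption identities plus telescoping in $j$, then Vandermonde) does not visibly close: after writing $j^2\binom{2n}{n+j}=n^2\binom{2n}{n+j}-2n(2n-1)\binom{2n-2}{n-1+j}$ or telescoping with $j\binom{2n}{n+j}=n\bigl[\binom{2n-1}{n+j-1}-\binom{2n-1}{n+j}\bigr]$, one is still left with sums of products of a binomial coefficient against a binomial tail from a \emph{different} row, which do not reduce by Vandermonde alone. The fallback (a Zeilberger recurrence checked against the closed form) would be a legitimate proof, but it is only named, not carried out. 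So as it stands this is a plan with a genuine gap at the decisive step, not a proof.

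For comparison, the paper itself states the theorem with a citation to Brent and Osborn rather than reproving it, but its own machinery in the section on recurrences handles the absolute value quite differently and more cleanly: order the indices as $p\ge q\ge0$, factor $p^2-q^2=m(m+2q)$ with $m=p-q$, and observe that the inner sum $G_0(n,m)=\sum_{q\ge0}\binom{2n}{n+m+q}\binom{2n}{n+q}(m+2q)f_q$ has the closed form $\tfrac{n}{2}\binom{2n}{n}\binom{2n}{n+m}$; the outer sum over $m$ is then Best's identity $\sum_{m}\binom{2n}{n+m}|m|=n\binom{2n}{n}$, giving $W_1(n)=2n^2\binom{2n}{n}^2$ in two lines. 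The structural point you are missing is that $|k^2-\ell^2|$ factors as $|k-\ell|\,|k+\ell|$, and it is this factorization (not the decomposition via $\min$) that makes the inner ordered sum collapse. I would suggest either adopting that route or actually executing the creative-telescoping fallback for $\Sigma$.
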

\noindent Numerical evidence suggested the following
generalization of Theorem~\ref{thm:S12}.
It was conjectured by the present authors~\cite[Conjecture~2]{rpb260},
and proved
by Krattenthaler and Schneider~\cite{KS}.
\begin{theorem}[Krattenthaler and Schneider] \label{thm:double_sum_ineq}
For all $m, n\in \N$,
\begin{equation*}
\sum_{k,\;\ell}\binom{2m}{m+k}\binom{2n}{n+\ell}|k^2-\ell^2|
 \ge 2mn\binom{2m}{m}\binom{2n}{n},
\end{equation*}
with equality if and only if $m=n$.
\end{theorem}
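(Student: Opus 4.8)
The plan is to reduce the inequality to a statement about the single-variable Tuenter sum $S_2$ via a separation of variables, and then analyse the resulting one-dimensional quantity. Write $a_k = \binom{2m}{m+k}$ and $b_\ell = \binom{2n}{n+\ell}$, both supported on (and symmetric about) $k,\ell=0$. Since $|k^2-\ell^2| = |k^2-\ell^2|$ is built from the two ``energies'' $k^2$ and $\ell^2$, and since on the region $k^2\ge \ell^2$ we have $|k^2-\ell^2| = k^2-\ell^2$ while on the complementary region it is $\ell^2-k^2$, I would first try the crude bound $|k^2-\ell^2|\ge k^2-\ell^2$ does not work (it can be negative), so instead split the double sum into the two symmetric pieces $\sum_{k^2\ge\ell^2}$ and $\sum_{k^2\le\ell^2}$. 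A cleaner route: use the elementary identity $|x-y| = x+y-2\min(x,y)$ with $x=k^2$, $y=\ell^2$. Then
\begin{equation*}
\sum_{k,\ell} a_k b_\ell\,|k^2-\ell^2|
 = \Bigl(\sum_k a_k k^2\Bigr)\Bigl(\sum_\ell b_\ell\Bigr)
 + \Bigl(\sum_k a_k\Bigr)\Bigl(\sum_\ell b_\ell \ell^2\Bigr)
 - 2\sum_{k,\ell} a_k b_\ell \min(k^2,\ell^2).
\end{equation*}
The first two terms are products of the known evaluations $\sum_k\binom{2n}{n+k} = \binom{2n}{n}\cdot 2^{?}$ — actually $\sum_k\binom{2m}{m+k}=2^{2m}$ and $\sum_k\binom{2m}{m+k}k^2 = \tfrac{m}{2}\cdot\tfrac{?}{?}$; more precisely $S_2(m) = \sum_k\binom{2m}{m+k}k^2 = m\,2^{2m-1}$ by \eqref{eq:PQ_def}. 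So the ``diagonal'' part contributes $m\,2^{2m-1}\cdot 2^{2n} + 2^{2m}\cdot n\,2^{2n-1} = (m+n)2^{2m+2n-1}$, which via the central-binomial/power-of-two relation is exactly what $S_2(m+n)$ would give — consistent with Theorem~\ref{thm:reduce2to1} in spirit. The real content is therefore entirely in the cross term $C(m,n):=\sum_{k,\ell}a_kb_\ell\min(k^2,\ell^2)$, and the claim becomes: $C(m,n)\le$ a specific value, with equality iff $m=n$.

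Next I would rewrite $\min(k^2,\ell^2)$ using the layer-cake formula $\min(k^2,\ell^2) = \sum_{j\ge 1}[\,j\le k^2\,]\,[\,j\le \ell^2\,]$, or more conveniently, summing over $t\ge 1$ with $\min(|k|,|\ell|)=\sum_{t\ge1}[t\le|k|][t\le|\ell|]$ and then converting $\min(k^2,\ell^2)=\bigl(\min(|k|,|\ell|)\bigr)^2$ via $u^2 = \sum_{t\ge 1}(2t-1)[t\le u]$. This factors the cross term:
\begin{equation*}
C(m,n) = \sum_{t\ge 1}(2t-1)\,A_m(t)\,B_n(t),\qquad
A_m(t):=\sum_{|k|\ge t}\binom{2m}{m+k},\quad B_n(t):=\sum_{|\ell|\ge t}\binom{2n}{n+\ell}.
\end{equation*}
So everything reduces to understanding the one-variable tail sums $A_m(t) = 2\sum_{k\ge t}\binom{2m}{m+k}$. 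The inequality we want is a Chebyshev/rearrangement-type statement: we must show $\sum_t (2t-1)A_m(t)B_n(t)$ is, after subtracting the target value $2mn\binom{2m}{m}\binom{2n}{n}$ rescaled appropriately, $\le 0$ with equality iff $m=n$. Here I expect to use that $A_m(t)/2^{2m}$, as a function of $t$, is a decreasing probability tail, and that the family $\{A_m(t)/2^{2m}\}_m$ is \emph{stochastically ordered} in $m$ (more steps in the random walk means larger displacement) — combined with a convexity/log-concavity property this should force the ``mixed'' term $\sum_t(2t-1)A_m(t)B_n(t)$ to be dominated by the ``pure'' terms $\sum_t(2t-1)A_m(t)^2$ and $\sum_t(2t-1)B_n(t)^2$ in exactly the way needed, by an AM–GM or Cauchy–Schwarz step that is tight precisely when the two tail sequences coincide, i.e.\ $m=n$.

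The main obstacle, I expect, is the equality characterisation rather than the inequality itself. The inequality should follow from some combination of Cauchy–Schwarz ($A_m(t)B_n(t)\le\tfrac12(A_m(t)^2+B_n(t)^2)$ is too lossy — it gives the wrong constant), so more likely one needs a \emph{weighted} comparison that exploits the specific algebraic identity relating $\sum_t(2t-1)A_m(t)^2$ to $S_2(2m)$ and hence to $2m^2\binom{2m}{m}^2$ (this is essentially Theorem~\ref{thm:S12} with $m=n$, via $W_1(m)=$ diagonal part $-2C(m,m)$). Pinning down that identity in closed form, and then showing the off-diagonal $C(m,n)$ satisfies $C(m,n) > \sqrt{?}$ — some interpolation between $C(m,m)$ and $C(n,n)$ — with strict inequality unless $m=n$, is the crux. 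An alternative, possibly cleaner, approach is to induct on $m+n$: use a Pascal-type recurrence $\binom{2m}{m+k}=\binom{2m-1}{m+k-1}+\binom{2m-1}{m+k}$ to relate the double sum at $(m,n)$ to values at smaller parameters, reducing to the already-known diagonal case $W_1$ plus a manifestly nonnegative defect term that vanishes only when $m=n$; the bookkeeping there (handling the odd upper index and the boundary $k=\pm m$ terms) is routine but lengthy, so I would only sketch it.
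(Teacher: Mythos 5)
First, a remark on the comparison you were asked for: the paper itself contains no proof of this theorem --- it is stated as a result of Krattenthaler and Schneider \cite{KS}, proved in an external preprint --- so your attempt has to stand on its own. As it stands, it does not. Your reduction is correct and genuinely useful: writing $|k^2-\ell^2|=k^2+\ell^2-2\min(k^2,\ell^2)$, evaluating the separable part as $(m+n)2^{2m+2n-1}$ via $S_2(m)=m2^{2m-1}$, and factoring the cross term as $C(m,n)=\sum_{t\ge 1}(2t-1)A_m(t)A_n(t)$ with $A_m(t)=\sum_{|k|\ge t}\binom{2m}{m+k}$ are all sound. But after this reduction the theorem is exactly the statement $C(m,n)\le (m+n)2^{2m+2n-2}-mn\binom{2m}{m}\binom{2n}{n}$, with equality iff $m=n$, and this --- the entire content of the result --- is left unproved. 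You gesture at stochastic ordering, log-concavity, a ``weighted comparison,'' and an induction on $m+n$, but none is carried out, and you explicitly flag the crux as unresolved. That is a genuine gap, not deferred bookkeeping.

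The frustrating part is that your own skeleton closes in a few lines, by the very Cauchy--Schwarz route you dismissed. The \emph{pointwise} bound $A_mB_n\le\frac12(A_m^2+B_n^2)$ is indeed too lossy, but the full Cauchy--Schwarz inequality for the inner product $\langle u,v\rangle=\sum_t(2t-1)u(t)v(t)$ gives $C(m,n)\le\sqrt{C(m,m)\,C(n,n)}$, and the diagonal values are forced by Theorem~\ref{thm:S12}: $C(n,n)=n2^{4n-1}-n^2\binom{2n}{n}^2$. Setting $x=m2^{4m-1}$, $y=m^2\binom{2m}{m}^2$, $z=n2^{4n-1}$, $w=n^2\binom{2n}{n}^2$, the two-vector Cauchy inequality applied to $(\sqrt{x-y},\sqrt{y})$ and $(\sqrt{z-w},\sqrt{w})$ gives $\sqrt{(x-y)(z-w)}+\sqrt{yw}\le\sqrt{xz}$, and AM--GM gives $\sqrt{xz}\le\frac12\bigl(x\,4^{n-m}+z\,4^{m-n}\bigr)=(m+n)2^{2m+2n-2}$, with equality iff $m=n$. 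Chaining these yields precisely the required bound on $C(m,n)$; strictness of the AM--GM step for $m\ne n$ (plus the trivial cases $mn=0$) settles the equality characterisation, and $m=n$ gives equality by the Brent--Osborn identity. Until you supply this completion (or an equivalent one), what you have is a reduction, not a proof.
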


\pagebreak[3]

\subsection{Recurrence relations for the odd case}	\label{sec:Grec}

Theorem~\ref{thm:S12} gives 
$W_1(n)$. We show how $W_3(n), W_5(n), \ldots$ can
be computed using recurrence relations.  More precisely, we express the
double sums
$W_{2k+1}(n)$ in terms of certain single sums $G_k(n,m)$, and
give a recurrence for the $G_k(n,m)$.  We then show that
$W_{2k+1}(n)$ is a linear combination of
$P_k(n),\ldots, P_{2k}(n)$, where the polynomials $P_m(n)$ are
as in~\eqref{eq:PQ_def},
and the coefficients
multiplying these polynomials satisfy another recurrence relation.

Define
\[
f_q = 	\begin{cases}
	1 \text{ if } q \ne 0;\\
	\frac{1}{2} \text{ if } q = 0.\\
	\end{cases}
\]
Using symmetry and the definition~\eqref{eq:Wrdef} of $W_k(n)$,
we have
\begin{equation}
W_{2k+1}(n) = 8 \sum_{q=0}^n\sum_{p=q}^n\binom{2n}{n+p}\binom{2n}{n+q}
		(p^2-q^2)^{2k+1}f_q;			\label{eq:Wnoabs}
\end{equation}
the factor $f_q$ allows for terms which would
otherwise be counted twice. 

Let $m=p-q$. Since $p^2-q^2 = m(m+2q)$,  we can write the double sum
$W_{2k+1}(n)/8$ in~\eqref{eq:Wnoabs} as
\begin{equation}					\label{eq:Gksum}
\sum_{q=0}^n\sum_{p=q}^n\binom{2n}{n+p}\binom{2n}{n+q}(p^2-q^2)^{2k+1}f_q
 	= \sum_{m\ge 0}m^{2k+1}G_k(n,m),
\end{equation}
where
\begin{equation}					\label{eq:Gkdef}
G_k(n,m) := \sum_{q\ge 0}\binom{2n}{n+m+q}\binom{2n}{n+q}(m+2q)^{2k+1}f_q.
\end{equation}
Observe that $G_k(0,m) = 0$.
For convenience we define $G_k(-1,m)=0$.
We observe that $G_k(n,m)$ satisfies a recurrence relation, as follows.
\begin{lemma}
For all $k, m, n \in \N$,
\begin{align}
G_{k+2}(n,m) &= 2(4n^2+m^2)G_{k+1}(n,m) - (4n^2-m^2)^2G_k(n,m) \nonumber\\
	&\;\;\;\;	+\,64n^2(2n-1)^2G_k(n-1,m).		\label{eq:G_rec}
\end{align}
\end{lemma}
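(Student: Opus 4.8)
The plan is to work directly with the defining sum~\eqref{eq:Gkdef} and establish the recurrence by manipulating the factor $(m+2q)^{2k+1}$ together with a contiguous (Pascal-type) relation for the binomial coefficients. First I would observe that, since $(m+2q)^2 = (m+2q)^2$ is the natural ``step'' in the exponent, going from $G_k$ to $G_{k+1}$ costs a factor $(m+2q)^2$ in the summand, and going to $G_{k+2}$ costs $(m+2q)^4$. So the real content of~\eqref{eq:G_rec} is an identity expressing $(m+2q)^4$ against lower powers, \emph{modulo} the binomial weight. The idea is that $(m+2q)^4 \binom{2n}{n+m+q}\binom{2n}{n+q}$ can be rewritten, using the recurrences satisfied by $\binom{2n}{n+j}$ in $n$ and in $j$, as a combination of $(m+2q)^2$-type terms at level $n$ and plain terms at levels $n$ and $n-1$; this is exactly the shape of~\eqref{eq:G_rec}, with the coefficient $64n^2(2n-1)^2$ arising from the identity $\binom{2n}{n+j} = \frac{(2n)(2n-1)}{(n+j)(n-j)}\binom{2n-2}{n-1+j}$ (so that lowering $n$ by one introduces the factor $(2n)^2(2n-1)^2 = 4n^2(2n-1)^2$ times a factor absorbing the $(n\pm j)$ denominators), and the remaining factors of $2$ and the polynomial $4n^2 - m^2$, $4n^2+m^2$ coming from the bookkeeping with $j = m+q$ and $j = q$.

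Concretely, the key steps I would carry out are: (i) Set $j_1 = n+m+q$, $j_2 = n+q$ and record the two ``ladder'' identities
\[
(n+j)(n-j)\binom{2n}{n+j} = 2n(2n-1)\binom{2n-2}{n-1+j},
\]
applied with $j = m+q$ and with $j = q$; multiplying these gives a clean expression for $\binom{2n}{n+m+q}\binom{2n}{n+q}$ in terms of $\binom{2n-2}{\cdot}\binom{2n-2}{\cdot}$ with a rational prefactor in $n, m, q$. (ii) Expand the prefactor: $(n+m+q)(n-m-q)(n+q)(n-q) = \bigl(n^2-(m+q)^2\bigr)(n^2-q^2)$, and note that $m+2q$ is the combination that makes this symmetric-looking, since $(m+q)+q = m+2q$ and $(m+q)-q = m$. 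A short computation should show that the two quadratics combine so that the $n$-dependence separates into the pieces $4n^2\pm m^2$ and $(4n^2-m^2)^2$ seen in~\eqref{eq:G_rec} once everything is re-expressed in the variable $m+2q$. (iii) Re-sum: the term with $\binom{2n-2}{\cdot}\binom{2n-2}{\cdot}$ becomes $G_k(n-1,m)$ after re-indexing $q$, while the remaining terms are combinations of $G_k(n,m)$ and $G_{k+1}(n,m)$; matching coefficients yields~\eqref{eq:G_rec}. The factor $f_q$ needs a moment's care near $q=0$, but since the ladder identity kills the $q=0$ boundary in a controlled way (and $G_k(0,m)=0$, $G_k(-1,m)=0$ by definition), the boundary contributions match on both sides.

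The main obstacle I anticipate is purely algebraic bookkeeping: verifying that the polynomial in $n$ and $m$ (equivalently, in $n$ and $m+2q$) that drops out of the product of the two ladder identities collapses \emph{exactly} to $2(4n^2+m^2)(m+2q)^2 - (4n^2-m^2)^2$ plus the $G_k(n-1,m)$ contribution, with no stray terms. An alternative, perhaps cleaner, route that sidesteps some of this is to prove~\eqref{eq:G_rec} via generating functions: encode $G_k(n,m)$ through $\sum_q \binom{2n}{n+m+q}\binom{2n}{n+q} f_q\, x^{m+2q}$ and extract the action of the operator $x\frac{d}{dx}$ (which multiplies by $m+2q$), translating the Pascal recurrence in $n$ into a differential-difference relation; the recurrence~\eqref{eq:G_rec} then follows by reading off the coefficient of the appropriate power. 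Either way, the structural identity being exploited is the factorization $p^2 - q^2 = m(m+2q)$ together with the contiguity of central binomial coefficients in their upper parameter, and I would expect the verification to reduce to a finite polynomial identity checkable by hand.
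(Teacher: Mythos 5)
Your proposal is correct and follows essentially the same route as the paper: the paper's proof is precisely the termwise identity $[(m+2q)^4-2(4n^2+m^2)(m+2q)^2+(4n^2-m^2)^2]\binom{2n}{n+m+q}\binom{2n}{n+q}=16(n+m+q)(n-m-q)(n+q)(n-q)\binom{2n}{n+m+q}\binom{2n}{n+q}=64n^2(2n-1)^2\binom{2n-2}{n-1+m+q}\binom{2n-2}{n-1+q}$, followed by multiplying by $(m+2q)^{2k+1}f_q$ and summing over $q\ge 0$, which is exactly your ladder-identity-plus-polynomial-bookkeeping plan (and the ``short computation'' you defer does check out, with the $n=0$ case handled by $G_k(0,m)=G_k(-1,m)=0$ as you note).
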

\begin{proof}
If $n=0$ the proof of~\eqref{eq:G_rec} is trivial,
since $G_k(0,m) = G_k(-1,m) = 0$.
Hence, suppose that $n > 0$. We observe that
\begin{align*}
&[(m+2q)^4-2(4n^2+m^2)(m+2q)^2+(4n^2-m^2)^2]
 \binom{2n}{n+m+q}\binom{2n}{n+q}\\
&= 16(n+m+q)(n-m-q)(n+q)(n-q)
 \binom{2n}{n+m+q}\binom{2n}{n+q}\\
&= 64n^2(2n-1)^2\binom{2n-2}{n-1+m+q}\binom{2n-2}{n-1+q}.
\end{align*}
Now multiply each side by $(m+2q)^{2k+1}f_q$ and sum over $q \ge 0$.
\end{proof}

The recurrence~\eqref{eq:G_rec} may be used to compute
$G_k(n,m)$ for given $(n,m)$ and $k = 0, 1, 2, \ldots$,
using the initial values
\[G_0(n,m) = \frac{n}{2}\,\binom{2n}{n}\binom{2n}{n+m}\]
and
\[G_1(n,m) = \frac{4n^2 + (2n-5)m^2}{2n-1}\,G_0(n,m).\]
These initial values may be verified from the definition~\eqref{eq:Gkdef}
by standard methods~\cite{AeqB}~-- we omit the details.

Write $g_k(n,m) = 0$ if $G_k(n,m) = 0$, and otherwise define $g_k(n,m)$ by 
\[G_k(n,m) = \binom{2n}{n}\binom{2n}{n+m}g_k(n,m).\]
\noindent
The recurrence~\eqref{eq:G_rec} for $G_k$ gives a corresponding
recurrence for $g_k$:
\begin{align}
g_{k+2}(n,m) &= 2(4n^2+m^2)g_{k+1}(n,m) - (4n^2-m^2)^2g_k(n,m) \nonumber\\
	&\;\;\;\;	+ 16n^2(n^2-m^2)g_k(n-1,m), 	\label{eq:g_rec}
\end{align}
with initial values 
\[			% See magmarpb/test_Ohtsuka35
g_0(n,m) = \frac{n}{2}\raisecomma \;\;\;\;
g_1(n,m) = \frac{4n^2 + (2n-5)m^2}{2n-1}\, g_0(n,m).
\]
Note that the $g_k(n,m)$ are rational functions in $n$ and $m$;
if computation with bivariate polynomials over $\Z$ is desired
then $g_k(n,m)$ can  be multiplied by
$(2n-1)(2n-3)\cdots(2n-(2k-1))$. 
If $n$ is fixed, then $g_k(n,m)$ is an even polynomial in $m$ and,
from the recurrence~\eqref{eq:g_rec}, the degree is $2k$.
This suggests that we should
define rational functions $\gamma_{k,j}(n)$ by
\[g_k(n,m) = \sum_{j=0}^{k}\gamma_{k,j}(n)m^{2j}.\]
For $j < 0$ or $j > k$ we define $\gamma_{k,j}(n) = 0$.
{From} the recurrence~\eqref{eq:g_rec}, we obtain the following recurrence
for the $\gamma_{k,j}(n)$:
\begin{align}
\gamma_{k+2,j}(n) &= 8n^2\gamma_{k+1,j}(n) 
	+2\gamma_{k+1,j-1}(n)
	-16n^4\gamma_{k,j}(n)
	+8n^2\gamma_{k,j-1}(n)	\nonumber	\\
&\;\;\;\; -\;\gamma_{k,j-2}(n)		% No factor 4 here! (fixed RPB v2)
	+16n^4\gamma_{k,j}(n-1)
	-16n^2\gamma_{k,j-1}(n-1).		\label{eq:alpharec}
\end{align}
The $\gamma_{k,j}(n)$ can
be computed from~\eqref{eq:alpharec}, using the initial values
\begin{align}
\gamma_{0,0}(n) &= n/2,\nonumber\\
\gamma_{1,0}(n) &= 2n^3/(2n-1),\label{eq:gamma_init}\\
\gamma_{1,1}(n) &= n(2n-5)/(4n-2).\nonumber
\end{align}
Using the definition of $\gamma_{k,j}(n)$
and~\eqref{eq:Wnoabs}--\eqref{eq:Gkdef}, we obtain
\[W_{2k+1}(n) = 4\binom{2n}{n}\sum_{j=0}^{k}\gamma_{k,j}(n)S_{2k+2j+1}(n).\]
Since $S_{2r+1}(n) = P_r(n)n\binom{2n}{n}$,
we obtain the following theorem, which shows that the 
double sums $W_{2k+1}(n)$
may be expressed in terms of the same polynomials $P_{m}(n)$ that
occur in expressions for the single sums of~\cite{rpb259,Tuenter1}.
\begin{theorem}			\label{thm:Wodd}
\begin{equation}		\label{eq:Woddsum}
W_{2k+1}(n) = 4n\sum_{j=0}^k\gamma_{k,j}(n)P_{k+j}(n) \cdot
	\binom{2n}{n}^2,
\end{equation}
where the polynomials $P_{k+j}(n)$ are as in~\eqref{eq:PQ_def},  
and the $\gamma_{k,j}(n)$ may be computed from the
recurrence~\eqref{eq:alpharec}
and the initial values given in~\eqref{eq:gamma_init}.
\end{theorem}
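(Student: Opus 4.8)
The plan is to prove Theorem~\ref{thm:Wodd} by assembling it from the machinery already in place: starting from the symmetrized expression~\eqref{eq:Wnoabs} for $W_{2k+1}(n)$, routing it through the single sums $G_k(n,m)$, and finishing with Tuenter's evaluation of $S_{2r+1}(n)$. First I would record, via~\eqref{eq:Gksum} and the definition~\eqref{eq:Gkdef} of $G_k$, that the substitution $m=p-q$ (using $p^2-q^2=m(m+2q)$) in~\eqref{eq:Wnoabs} gives
\[
W_{2k+1}(n)=8\sum_{m\ge 0}m^{2k+1}G_k(n,m);
\]
the term $m=0$ is harmless because of the factor $m^{2k+1}$, so we never need to know $G_k(n,0)$. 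Next, substituting $G_k(n,m)=\binom{2n}{n}\binom{2n}{n+m}g_k(n,m)$ and then the polynomial expansion $g_k(n,m)=\sum_{j=0}^k\gamma_{k,j}(n)m^{2j}$, and interchanging the (terminating) sum over $m$ with the finite sum over $j$, I obtain
\[
W_{2k+1}(n)=8\binom{2n}{n}\sum_{j=0}^k\gamma_{k,j}(n)\sum_{m\ge 0}m^{2k+2j+1}\binom{2n}{n+m}.
\]

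The single elementary step is then to notice that, since $2k+2j+1$ is odd, the symmetry $\binom{2n}{n+m}=\binom{2n}{n-m}$ gives $\sum_{m\ge 0}m^{2k+2j+1}\binom{2n}{n+m}=\tfrac12\sum_{m}|m|^{2k+2j+1}\binom{2n}{n+m}=\tfrac12 S_{2k+2j+1}(n)$ (the $m=0$ term contributes nothing on either side). Substituting this yields
\[
W_{2k+1}(n)=4\binom{2n}{n}\sum_{j=0}^k\gamma_{k,j}(n)\,S_{2k+2j+1}(n),
\]
and then the odd half of Tuenter's identity~\eqref{eq:PQ_def}, namely $S_{2(k+j)+1}(n)=P_{k+j}(n)\,n\binom{2n}{n}$, converts this into~\eqref{eq:Woddsum}.

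In this assembly the only substantive inputs are external to the computation, and that is where the real work lies. The recurrence~\eqref{eq:g_rec} for $g_k$ (equivalently~\eqref{eq:G_rec} for $G_k$) is established by the algebraic factorization in the Lemma together with summation over $q$; the initial values $g_0(n,m)=n/2$ and $g_1(n,m)=\frac{4n^2+(2n-5)m^2}{2n-1}\,g_0(n,m)$ must be verified directly from~\eqref{eq:Gkdef} by a standard hypergeometric / Zeilberger-type argument, which I expect to be the most laborious part. One also needs the structural fact that, for fixed $n$, $g_k(n,m)$ is an even polynomial in $m$ of degree exactly $2k$ — this follows by induction from~\eqref{eq:g_rec} and is exactly what legitimizes the definition of the $\gamma_{k,j}(n)$ and the recurrence~\eqref{eq:alpharec}. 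Granting these facts and Tuenter's theorem, the proof of Theorem~\ref{thm:Wodd} is precisely the short chain of substitutions above; the genuine obstacle is the bookkeeping in verifying the initial data, not the final assembly.
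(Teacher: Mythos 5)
Your proposal is correct and follows essentially the same route as the paper: the chain \eqref{eq:Wnoabs}--\eqref{eq:Gkdef}, the factorization $G_k(n,m)=\binom{2n}{n}\binom{2n}{n+m}g_k(n,m)$ with $g_k(n,m)=\sum_j\gamma_{k,j}(n)m^{2j}$, the identification $\sum_{m\ge 0}m^{2k+2j+1}\binom{2n}{n+m}=\tfrac12 S_{2k+2j+1}(n)$, and finally $S_{2r+1}(n)=P_r(n)\,n\binom{2n}{n}$. You also correctly locate the genuine work (the recurrence lemma, the initial values $G_0,G_1$ verified by Zeilberger-type methods, and the fact that $g_k(n,m)$ is an even polynomial of degree $2k$ in $m$), which the paper likewise treats as the substantive inputs while omitting some details.
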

The factor before the binomial coefficient in~\eqref{eq:Woddsum}
is a rational function 
$\omega_k(n)$ with
denominator $(2n-1)(2n-3)\cdots(2n- 2\lceil k/2 \rceil +1)$.
Thus, we have the following corollary of Theorem~\ref{thm:Wodd}.
\begin{corollary}			\label{cor:Wodd}
If $k\in \N$ and  $W_k(n)$ is defined by~\eqref{eq:Wrdef}, then
\[W_{2k+1}(n) = \omega_k(n)\binom{2n}{n}^2,\]
where
\vspace*{-10pt} 
\[\omega_k(n)\prod_{j=1}^{\lceil k/2 \rceil}(2n-2j+1)\]
is a polynomial of degree $2k+\lceil k/2\rceil + 2$ over~$\Z$.
The first four cases are:
\begin{align*}
\omega_0(n) &= 2n^2,\\ % Theorem \ref{thm:S12}
\omega_1(n) &= \frac{2n^3(8n^2-12n+5)}{2n-1}\,\raisecomma\\
  % (former) Theorem \ref{thm:S32}
\omega_2(n) &=\frac{2n^3(128n^4-512n^3+800n^2-568n+153)}
 {2n-1}\,\raisecomma\;\;\text{and}\\
\omega_3(n) &= \frac{2n^3\,\overline{\omega}_3(n)}
 {(2n-1)(2n-3)}\,\raisecomma\;\;\text{where}\\
 \overline{\omega}_3(n) &=
 9216n^7-86016n^6+350464n^5-802304n^4+\\
 &\phantom{=\;\;} 1106856n^3-914728n^2+417358n-80847.
\end{align*}
\end{corollary}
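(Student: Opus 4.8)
The plan is to extract Corollary~\ref{cor:Wodd} directly from Theorem~\ref{thm:Wodd}, which already gives $W_{2k+1}(n) = \omega_k(n)\binom{2n}{n}^2$ with
\[\omega_k(n) = 4n\sum_{j=0}^{k}\gamma_{k,j}(n)\,P_{k+j}(n),\]
where each $P_{k+j}(n)\in\Z[n]$ has degree $k+j$. Writing $D_k := \prod_{i=1}^{\lceil k/2\rceil}(2n-2i+1)$, the corollary asserts $\omega_k(n)D_k\in\Z[n]$, $\deg\bigl(\omega_k(n)D_k\bigr)=2k+\lceil k/2\rceil+2$, and lists the first four $\omega_k$. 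So there are three things to do: bound the denominator of $\omega_k$ by $D_k$ (up to a factor $2$, which the outer $4n$ absorbs), establish integrality, and determine the degree; the explicit cases are then a finite computation.

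For the denominator I would prove the sharper statement that the $\gamma_{k,j}$ \emph{themselves} already carry the small denominator: for all $k,j$,
\[2\,\gamma_{k,j}(n)\,D_k\in\Z[n],\]
where, note, $D_k$ involves only $\lceil k/2\rceil$ linear factors, not the $2n-1,2n-3,\dots,2n-(2k-1)$ one reads off naively. This follows by a two-step induction on $k$ from the initial values~\eqref{eq:gamma_init} and the recurrence~\eqref{eq:alpharec}. The mechanism: of the seven terms on the right of~\eqref{eq:alpharec}, the six carrying a coefficient $8n^2,\,2,\,16n^4,\,8n^2,\,16n^4,\,16n^2$ lose nothing, and the lone coefficient-$1$ term $-\gamma_{k,j-2}(n)$ is rescued by the factor $2$ sitting in ``$2\gamma_{k+2,j}$''; meanwhile the only place a fresh linear factor is demanded is in the shifted arguments $n-1$, and since
\[\prod_{i=1}^{\lceil k/2\rceil}\bigl(2(n-1)-2i+1\bigr) = \prod_{i=2}^{\lceil k/2\rceil+1}(2n-2i+1),\]
this costs just one extra factor, consistently with $\lceil(k+2)/2\rceil=\lceil k/2\rceil+1$. (The bookkeeping is legitimate because $\Z[n]$ is a UFD, with $2$ and the $2n-2i+1$ pairwise non-associate primes.) Granting this, $\omega_k(n)D_k = 2n\sum_{j}\bigl[2\gamma_{k,j}(n)D_k\bigr]P_{k+j}(n)\in 2n\cdot\Z[n]\subseteq\Z[n]$, which settles both the denominator and the integrality.

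For the degree it suffices, since $\deg D_k=\lceil k/2\rceil$, to show $\deg\omega_k=2k+2$. I would do this via asymptotics of the defining random walk: $16^{-n}W_{2k+1}(n) = E\,|A_n^2-B_n^2|^{2k+1}$, where $A_n,B_n$ are independent sums of $2n$ steps $\pm\tfrac12$; factoring $A_n^2-B_n^2=(A_n-B_n)(A_n+B_n)$ and observing that $n^{-1/2}(A_n-B_n)$ and $n^{-1/2}(A_n+B_n)$ converge jointly, by the multivariate CLT, to independent standard normals (the limiting covariance is diagonal because $A_n-B_n$ and $A_n+B_n$ are uncorrelated), one obtains, using uniform integrability of the polynomial-order moments, $16^{-n}W_{2k+1}(n)\sim\bigl(E\,|Z|^{2k+1}\bigr)^2 n^{2k+1}$ for $Z$ standard normal, i.e.\ $W_{2k+1}(n)\sim\tfrac{2}{\pi}\bigl(2^k k!\bigr)^2 16^n n^{2k+1}$. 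Since $\binom{2n}{n}^2\sim 16^n/(\pi n)$, this gives $\omega_k(n)\sim 2^{2k+1}(k!)^2 n^{2k+2}$; in particular $\deg\omega_k=2k+2$ with positive leading coefficient (matching $\omega_0=2n^2$, $\omega_1\sim 8n^4$, $\omega_2\sim 128 n^6$, $\omega_3\sim 4608 n^8$), so $\deg\bigl(\omega_k(n)D_k\bigr)=2k+\lceil k/2\rceil+2$. (An algebraic alternative tracks leading coefficients through~\eqref{eq:alpharec}, but then one must separately rule out cancellation among the top-degree contributions.) Finally, $\omega_0(n)=2n^2$ is Theorem~\ref{thm:S12}, and $\omega_1,\omega_2,\omega_3$ result from substituting the $\gamma_{k,j}$ computed from~\eqref{eq:alpharec}--\eqref{eq:gamma_init} and Tuenter's polynomials $P_0,P_1,\dots,P_6$ into~\eqref{eq:Woddsum} --- routine, if a little lengthy.

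The main obstacle is the denominator lemma: spotting and justifying the ``collapse'' whereby the odd linear factors in the denominator of $\gamma_{k,j}$ (hence of $\omega_k$) accumulate only on every second step of~\eqref{eq:alpharec}, which in turn is because the $n\mapsto n-1$ term in that recurrence shifts the relevant set of linear factors by exactly one. Once that is in hand, the integrality, the degree, and the four explicit cases all follow with little further effort.
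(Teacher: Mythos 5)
Your argument is correct, and it in fact supplies more than the paper does: the paper obtains the corollary from Theorem~\ref{thm:Wodd} by simply \emph{asserting} that the factor $4n\sum_{j}\gamma_{k,j}(n)P_{k+j}(n)$ in \eqref{eq:Woddsum} is a rational function with denominator $(2n-1)(2n-3)\cdots(2n-2\lceil k/2\rceil+1)$, giving no argument for either the denominator collapse or the degree. Your denominator lemma --- that $2\,\gamma_{k,j}(n)\prod_{i=1}^{\lceil k/2\rceil}(2n-2i+1)\in\Z[n]$, proved by two-step induction on $k$ through \eqref{eq:alpharec} with base cases \eqref{eq:gamma_init}, the key point being that the $n\mapsto n-1$ terms shift the relevant set of linear factors by exactly one so that a fresh factor $2n-1$ is demanded only once per two steps --- is precisely the missing justification, and your observation that the outer $4n$ supplies the power of $2$ needed to cover the coefficient-$1$ term $-\gamma_{k,j-2}$ is the right bookkeeping (I checked the base cases and the inductive step; they work, e.g.\ $2\gamma_{2,0}D_2=16n^4$, $2\gamma_{2,2}D_2=n(2n-9)$). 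Where you genuinely diverge from what the authors presumably had in mind is the degree: rather than tracking leading terms through the recurrence (where one must rule out cancellation), you read off $\omega_k(n)\sim 2^{2k+1}(k!)^2\,n^{2k+2}$ from the central limit theorem applied to the underlying random walk, using that $A_n-B_n$ and $A_n+B_n$ are uncorrelated and hence asymptotically independent normals; this pins down the exact degree \emph{and} the leading coefficient in closed form (it checks against all four listed cases: $2,\,8,\,128,\,4608$), at the modest cost of invoking uniform integrability of the moments. Both routes are sound, and the explicit $\omega_1,\omega_2,\omega_3$ are, as you say, a finite computation either way.
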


\subsection{
\texorpdfstring{$W_\beta$ for even $\beta$}{The even case}}
							\label{subsec:even}
Now we consider $W_\beta(n)$ for even~$\beta$.
This case is easier than the case
of odd~$\beta$ % (considered in \S\ref{subsec:odd})
because the absolute value
in the definition~\eqref{eq:Wrdef} has no effect when $\beta$ is even.
Theorem~\ref{thm:Weven} shows that $W_{2r}(n)$ can be expressed in terms
of the single sums $S_0(n), S_2(n), \ldots, S_{4r}(n)$
or, equivalently, in terms of the polynomials
$Q_0(n), Q_1(n), \ldots, Q_{2r}(n)$.
It follows % from Theorem~\ref{thm:Weven}
that $2^{2r-4n}W_{2r}(n)$ is a polynomial over $\Z$ of degree $2r$ in~$n$.

\pagebreak[3]
\begin{theorem}		\label{thm:Weven}
For all $n\in\N$,
\begin{align*}
W_{2r}(n) &= \sum_k (-1)^k\binom{2r}{k}S_{2k}(n)S_{4r-2k}(n)\\
 &= 2^{4n-2r} \sum_k (-1)^k\binom{2r}{k}Q_{k}(n)Q_{2r-k}(n),
\end{align*}
where $Q_r(n)$ and $S_r(n)$ are as
\eqref{eq:Sr_def}--\eqref{eq:PQ_def} of~$\S\ref{sec:reducible}$,
and
$W_\beta(n)$ is defined by $\eqref{eq:Wrdef}$.
\end{theorem}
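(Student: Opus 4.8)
The plan is to exploit the observation already flagged in the text: when $\beta = 2r$ is even, the absolute value in the definition~\eqref{eq:Wrdef} of $W_{2r}(n)$ is inert, since $|k^2-\ell^2|^{2r} = (k^2-\ell^2)^{2r}$ for all integers $k,\ell$ (the convention $0^0 = 1$ handling the degenerate case $r=0$, where $k = \pm\ell$ contributes). So the first step is simply to write
\[
W_{2r}(n) = \sum_{k,\;\ell}\binom{2n}{n+k}\binom{2n}{n+\ell}(k^2-\ell^2)^{2r}.
\]

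Next I would expand the weight by the binomial theorem, $(k^2-\ell^2)^{2r} = \sum_{j}(-1)^j\binom{2r}{j}k^{2(2r-j)}\ell^{2j}$, and interchange the order of summation; this is legitimate because all sums are finite, the binomial coefficients vanishing outside a bounded range of $k$ and $\ell$. This yields
\[
W_{2r}(n) = \sum_j(-1)^j\binom{2r}{j}\Bigl(\sum_k\binom{2n}{n+k}k^{4r-2j}\Bigr)\Bigl(\sum_\ell\binom{2n}{n+\ell}\ell^{2j}\Bigr).
\]
Since $k^{2a} = |k|^{2a}$, each inner sum is one of Tuenter's single sums from~\eqref{eq:Sr_def}: $\sum_k\binom{2n}{n+k}k^{2a} = S_{2a}(n)$. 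Hence $W_{2r}(n) = \sum_j(-1)^j\binom{2r}{j}S_{4r-2j}(n)S_{2j}(n)$, which is the first claimed identity after the reindexing $j\mapsto 2r-j$, using $(-1)^{2r-j} = (-1)^j$ and $\binom{2r}{2r-j}=\binom{2r}{j}$. For the second identity I would substitute $S_{2j}(n) = Q_j(n)2^{2n-j}$ and $S_{4r-2j}(n) = S_{2(2r-j)}(n) = Q_{2r-j}(n)2^{2n-(2r-j)}$ from~\eqref{eq:PQ_def}; the powers of $2$ combine to $2^{4n-2r}$, independent of $j$, giving $W_{2r}(n) = 2^{4n-2r}\sum_j(-1)^j\binom{2r}{j}Q_j(n)Q_{2r-j}(n)$.

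There is no substantial obstacle here: the whole argument is a short manipulation once one notices that the absolute value has no effect. The only points needing mild care are the justification of the interchange of summation (trivial, by finiteness) and the cosmetic reindexing required to match the exact symmetric form stated in the theorem, together with the treatment of $0^0$ in the degenerate case $r = 0$, where the identity reduces to $W_0(n) = S_0(n)^2 = 2^{4n}$.
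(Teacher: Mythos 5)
Your proof is correct and follows essentially the same route as the paper's: drop the absolute value for even exponent, expand $(k^2-\ell^2)^{2r}$ by the binomial theorem, interchange summation so the inner sums separate into $S_{4r-2k}(n)S_{2k}(n)$, and then apply~\eqref{eq:PQ_def} for the second form. (The reindexing $j\mapsto 2r-j$ you mention is not even needed, since the product $S_{2j}(n)S_{4r-2j}(n)$ is already symmetric in the two factors.)
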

\begin{proof}
{From} the definition of $W_{2r}(n)$ we have
\[
W_{2r}(n) = \sum_i \sum_j \binom{2n}{n+i}\binom{2n}{n+j}(i^2-j^2)^{2r}.
\]
Write
\[(i^2-j^2)^{2r} = \sum_k (-1)^k\binom{2r}{k}i^{4r-2k}j^{2k},\]
change the order of summation in the resulting triple sum, and observe
that the inner sums over $i$ and $j$ separate, giving
$S_{4r-2k}(n)S_{2k}(n)$.
This proves the first part of the theorem.  The second part follows
from~\eqref{eq:PQ_def}.
\end{proof}
For example, the first four cases are
\begin{align*}
W_0(n) &= 2^{4n},\\
W_2(n) &= 2^{4n-1}\,n(2n-1),\\
W_4(n) &= 2^{4n-2}\,n(2n-1)(18n^2-33n+17),\\
W_6(n) &= 2^{4n-3}\,n(2n-1)(900n^4-4500n^3+8895n^2-8055n+2764).
\end{align*}
It follows from Theorem~\ref{thm:Weven}
that the coefficients of $2^{2r-4n}W_{2r}(n)$ are in~$\Z$,
but it is not obvious how to prove the stronger result,
suggested by the cases above,
that the coefficients of $2^{r-4n}W_{2r}(n)$ are in $\Z$.
We leave this as a conjecture.

\pagebreak[3]

\section{A triple sum}			\label{sec:triple}

In Theorem~\ref{thm:3-fold}
we give a triple sum that is analogous to the double sum of
Theorem~\ref{thm:S12}.
A straightforward but tedious proof is given in~\cite[Appendix]{rpb260}.
The result also follows from the case $d=3$ of 
a more general result proved in~\cite[Proposition 1.1]{rpb263}
for the analogous
$d$-fold sum,
where the weight function
is generalized to the absolute value of a 
Vandermonde $|\Delta(i_1^2,i_2^2,\ldots,i_d^2)|$.

\begin{theorem}		\label{thm:3-fold}
For all $n\in\N$,
\begin{align*}		% Conjectured by Hideyuki
\sum_{i,\;j,\;k}&\binom{2n}{n+i}\binom{2n}{n+j}\binom{2n}{n+k}
 |(i^2-j^2)(i^2-k^2)(j^2-k^2)|\\ 
 &= \; 3n^3(n-1)\binom{2n}{n}^2 2^{2n-1}.
\end{align*}
\end{theorem}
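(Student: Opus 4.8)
The plan is to imitate the structure that worked for the double sum $W_1(n)$: exploit the symmetry of the weight under the $S_3$-action on $(i,j,k)$ and under sign changes of the coordinates, reduce to a sum over an ordered region where all the absolute values can be removed, and then peel off one layer of binomial coefficients by shifting indices and applying Vandermonde's identity. First I would note that the summand is invariant under permuting $(i,j,k)$ and under $i\mapsto -i$ (etc.), so the sum equals $48$ times the sum over $0\le k\le j\le i\le n$, with the usual correction factors $f_q$ (as in~\eqref{eq:Wnoabs}) to compensate for diagonal terms where two or more of the indices coincide or vanish. On that ordered region we have $|(i^2-j^2)(i^2-k^2)(j^2-k^2)| = (i^2-j^2)(i^2-k^2)(j^2-k^2)$, a genuine polynomial, so the absolute values disappear.

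Next I would substitute $a = i-j$, $b = j-k$ (so $a,b\ge 0$), rewrite $i^2-j^2 = a(a+2j)$, $i^2-k^2 = (a+b)(a+b+2k)$, $j^2-k^2 = b(b+2k)$, and expand the resulting polynomial in the inner summation variable (say $k$ or $q=k$), exactly as in the passage from~\eqref{eq:Wnoabs} to~\eqref{eq:Gkdef}. This expresses the triple sum as a finite $\Z$-linear combination, with coefficients that are polynomials in $a,b$ and $n$, of single sums of Tuenter's type $S_\beta(\cdot)$ — or, after one shift-and-Vandermonde step on the innermost binomial pair $\binom{2n}{n+i}\binom{2n}{n+k}$ or $\binom{2n}{n+j}\binom{2n}{n+k}$, of sums of the form treated in Theorem~\ref{thm:reduce2to1}. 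Each such ingredient is known in closed form via~\eqref{eq:PQ_def}, so in principle everything can be assembled. Alternatively — and this is probably the cleaner route to present — one can mimic the cancellation identity used in the Lemma of~\S\ref{sec:Grec}: find a product of linear factors in the shifted indices that, when multiplied against $\binom{2n}{n+i}\binom{2n}{n+j}\binom{2n}{n+k}$, collapses via the elementary identity $(n+x)(n-x)\binom{2n}{n+x} = n^2\binom{2n-1}{n-1+x}\!\cdot\!2 / (\text{stuff})$ to a product of $\binom{2n-2}{\cdot}$'s, producing a recurrence in $n$; one then checks that $3n^3(n-1)\binom{2n}{n}^2 2^{2n-1}$ satisfies the same recurrence with matching initial values at $n=0,1$.

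The main obstacle I anticipate is the bookkeeping in the expansion step: unlike the double-sum case, the product $(i^2-j^2)(i^2-k^2)(j^2-k^2)$ is degree $6$ and, after the substitution $a=i-j$, $b=j-k$, produces many monomials in $a$, $b$ and the inner variable, each spawning a separate single-sum evaluation whose closed form then has to be recombined and simplified down to the remarkably compact answer $3n^3(n-1)\binom{2n}{n}^2 2^{2n-1}$. Verifying that all the lower-order terms cancel and the leading behaviour is exactly $3n^3(n-1)$ is the delicate part; the excerpt itself signals this by calling the direct proof in~\cite[Appendix]{rpb260} ``straightforward but tedious.'' For a self-contained writeup I would lean on the recurrence-in-$n$ approach to sidestep most of that algebra, but either way the essential difficulty is the same: turning a $6$th-degree symmetric weight into a manageable recurrence or a manageable linear combination of Tuenter sums without drowning in intermediate expressions. (One could also simply invoke the case $d=3$ of~\cite[Proposition 1.1]{rpb263}, as the excerpt notes, but that defeats the purpose of giving an independent argument here.)
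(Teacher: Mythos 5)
Your proposal is a plan, not a proof: the central computation that would actually produce the value $3n^3(n-1)\binom{2n}{n}^2 2^{2n-1}$ is never carried out. (For what it is worth, the paper itself does not prove Theorem~\ref{thm:3-fold} in the text either; it defers to the appendix of the arXiv version~\cite{rpb260} and to the $d=3$ case of~\cite[Proposition 1.1]{rpb263}, describing the direct argument as ``straightforward but tedious.'' The tedium is precisely the part you have left undone.) Concretely: in your first route, after the substitution $a=i-j$, $b=j-k$ you would obtain a large $\Z[a,b,n]$-linear combination of single sums, and the entire content of the theorem is that this combination collapses to the stated closed form; you assert that ``in principle everything can be assembled'' but do not exhibit the combination, the closed forms of its ingredients, or the cancellation. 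In your second route, the analogue of the identity in the Lemma of \S\ref{sec:Grec} --- a specific polynomial in the shifted indices whose product with $\binom{2n}{n+i}\binom{2n}{n+j}\binom{2n}{n+k}$ telescopes to a product of $\binom{2n-2}{\cdot}$'s --- is never written down, and it is not obvious that one exists which closes up into a usable two-term recurrence in $n$ for this particular degree-$6$ weight; the displayed ``identity'' $(n+x)(n-x)\binom{2n}{n+x}=n^2\binom{2n-1}{n-1+x}\cdot 2/(\text{stuff})$ is not even a well-formed statement.

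There is also a smaller set-up issue worth fixing if you pursue the first route: the factor $48$ and the ``correction factors $f_q$'' need more care than you give them. Terms where two of $|i|,|j|,|k|$ coincide contribute zero (the weight vanishes), so the $S_3$-ordering causes no overcounting there, but the sign-symmetry reduction requires weight-$\tfrac12$ corrections exactly for indices equal to zero, and these must be tracked through three variables rather than one. None of this is fatal, but combined with the missing main computation it means the proposal cannot be accepted as a proof; it is an accurate description of the difficulty rather than a resolution of it.
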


\section{Further identities} \label{sec:further}

In this section we give various identities that were stated in~\cite{rpb260}.
Of these, \eqref{eq:S13}, \eqref{eq:S15}, \eqref{eq:S17},
\eqref{eq:O3} and \eqref{eq:O5} were conjectural.
The conjectures have since been proved by
Bostan, Lairez and Salvy~\cite[\S7.3.2]{Bostan15}.

\subsection*{Centered double sums}

Recall that, from the definition~\eqref{eq:Srmndef}, we have
\begin{equation}	\label{eq:S1mndef}
S_{\alpha,1}(n) = \sum_{i,\;j}\binom{2n}{n+i}\binom{2n}{n+j}
 |i^\alpha-j^\alpha|.
\end{equation}
We give closed-form expressions for
$S_{\alpha,1}(n)$, $1 \le \alpha \le 8$.
Observe that~\eqref{eq:S11} follows from Theorem~\ref{thm:reduce2to1}
since $S_{1,1}(n) = T_1(n,n)$,
and~\eqref{eq:S12} is equivalent to Theorem~\ref{thm:S12}.
It appears that, for even $\alpha$, $S_{\alpha,1}(n)$ 
is a rational function of $n$
multiplied by $\binom{2n}{n}^2$,
but for odd $\alpha$, it is a rational function
of~$n$ multiplied by $\binom{4n}{2n}$.
This was conjectured in~\cite{rpb260}, and has been proved
by Krattenthaler and Schneider~\cite{KS}.

\begin{align}	% See magmarpb/test_Ohtsuka51
S_{2,1}(n) &= 2n^2\binom{2n}{n}^2,	\label{eq:S12}\\
S_{4,1}(n) &=\; \frac{2n^3(4n-3)}{2n-1}
 \binom{2n}{n}^2,	\label{eq:S14}\\ % Ohtsuka (6)
S_{6,1}(n) &=\; \frac{2n^3(11n^2-15n+5)}{2n-1}
 \binom{2n}{n}^2, \label{eq:S16}\\ % Ohtsuka (7)
S_{8,1}(n) &=\; \frac{2n^3(80n^4-306n^3+428n^2-266n+63)}
 {(2n-1)(2n-3)} \binom{2n}{n}^2, \label{eq:S18} % RPB, 20140608
\\
S_{1,1}(n) &= 2n\binom{4n}{2n},		\label{eq:S11}\\
S_{3,1}(n) &= \frac{4n^2(5n-2)}{4n-1}\binom{4n-1}{2n-1},
\label{eq:S13} % equivalent to O. other_identities(6) 
\end{align}
\begin{align}
S_{5,1}(n) &= \frac{8n^2(43n^3-70n^2+36n-6)}{(4n-2)(4n-3)}
 \binom{4n-2}{2n-2},
  \label{eq:S15}\\ % O. other_identities(6) corrected RPB
S_{7,1}(n) &= \frac{16n^2 P_{7,1}(n)}
 {(4n-3)(4n-4)(4n-5)} \binom{4n-3}{2n-3},\;\; n \ge 2, \text{ where} \nonumber\\
 P_{7,1}(n) &= 531n^5-1960n^4+2800n^3-1952n^2+668n-90,
 \label{eq:S17}\\		% RPB, 20140608
 (S_{7,1}(1) &= 12 \text{ is a special case).} \nonumber
\end{align}

Following are some similar identities.
We observe that,
since $i^4-j^4 = (i^2+j^2)(i^2-j^2)$,
~\eqref{eq:O1} is easily seen to be equivalent to~\eqref{eq:S14}.
Similarly, since $i^6-j^6 = (i^4 + i^2j^2 + j^4)(i^2-j^2)$,
any two of \eqref{eq:S16}, \eqref{eq:O2} and \eqref{eq:O4} imply the third.
Higher-dimensional generalizations of~\eqref{eq:O3}--\eqref{eq:O4} are
known~\cite{rpb263}. % but not \eqref{eq:O5} as $\delta$ too large

\begin{align}	% See magmarpb/test_Ohtsuka52
\sum_{i,\;j}\binom{2n}{n\!+\!i}\binom{2n}{n\!+\!j} |i^2(i^2-j^2)|
 &=\;\frac{n^3(4n-3)}{2n-1}\,\binom{2n}{n}^2, \label{eq:O1}\\
\sum_{i,\;j}\binom{2n}{n\!+\!i}\binom{2n}{n\!+\!j} |i^4(i^2-j^2)|
 &=\; \frac{n^3(10n^2-14n+5)}{2n-1}\,\binom{2n}{n}^2, \label{eq:O2}\\
\sum_{i,\;j}\binom{2n}{n\!+\!i}\binom{2n}{n\!+\!j} |ij(i^2-j^2)|
 &= \frac{2n^3(n-1)}{2n-1}\,\binom{2n}{n}^2, \label{eq:O3}\\
\sum_{i,\;j}\binom{2n}{n\!+\!i}\binom{2n}{n\!+\!j} |i^2 j^2(i^2-j^2)|
 &=\; \frac{2n^4(n-1)}{2n-1}\,\binom{2n}{n}^2, \label{eq:O4}\\
\!\!\sum_{i,\;j}\binom{2n}{n\!+\!i}\binom{2n}{n\!+\!j} |i^3 j^3(i^2-j^2)|
 &=
  \frac{2n^4(n-1)(3n^2-6n+2)}{(2n-1)(2n-3)}\,\binom{2n}{n}^2\!.
   \label{eq:O5}
\end{align}

\section{Acknowledgements}
We thank Christian Krattenthaler for sending us
proofs~\cite{KS} of some of the conjectures that
we made in an earlier version~\cite{rpb260} of this paper,
and informing us of the work of Bostan \emph{et al}~\cite{Bostan15}.
We also thank David Callan for sending us his short
proof~\cite{Callan} of~\eqref{eq:BO_rpb255} using lattice paths
(specifically, equations (1.12) and (1.14) of~\cite{GKS}).
The first author was supported in part
by Australian Research Council grant DP140101417.

\pagebreak[3]

\bigskip
\hrule
\bigskip

\noindent 2010 {\it Mathematics Subject Classification}:
Primary 05A10, 11B65;
Secondary 05A15, 05A19, 44A60, 60G50.

\noindent \emph{Keywords: } 
absolute binomial sum, binomial sum, centered binomial sum,
Dumont-Foata polynomial, Tuenter polynomial, Vandermonde kernel.

\bigskip
\hrule
\bigskip

\noindent (Concerned with sequences
\seqnum{A166337},
\seqnum{A254408},
\seqnum{A268147},
\seqnum{A268148},
\seqnum{A268149},\linebreak 
\seqnum{A268150},
\seqnum{A268151},
\seqnum{A268152}.) 

\bigskip
\hrule
\bigskip

\vspace*{+.1in}
\noindent

\bigskip
\hrule
\bigskip

\noindent

\end{document}